\providecommand{\U}[1]{\protect\rule{.1in}{.1in}}
\newtheorem{theorem}{Theorem}
\newtheorem{lemma}[theorem]{Lemma}
\newenvironment{proof}[1][Proof]{\textbf{#1.} }{\ \rule{0.5em}{0.5em}}
\begin{document}

\begin{center}
\bigskip Two-parameter Sturm-Liouville problems

\vspace{1cm}B. Chanane\footnote{Corresponding author} and A. Boucherif

Department of Mathematics and Statistics,

King Fahd University of Petroleum and Minerals

Dhahran 31261, Saudi Arabia.

chanane@kfupm.edu.sa

\bigskip May 4, 2012
\end{center}

\textbf{Abstract }This paper deals with the\ computation of the eigenvalues of
two-parameter Sturm-Liouville (SL)\ problems using the \textit{Regularized
Sampling Method}, a method which has been effective in computing the
eigenvalues of broad classes of SL problems (Singular, Non-Self-Adjoint,
Non-Local, Impulsive,...). We have shown, in this work that it can tackle
two-parameter SL problems with equal ease. An example was provided to
illustrate the effectiveness of the method.

\renewcommand{\theequation}{\thesection.\arabic{equation}}

\section{Introduction}

In an interesting paper published in 1963, F. M. Arscott [2] showed that the
method of separation of variables used in solving boundary value problems for
Laplace's equation leads to a two-parameter eigenvalue problem for ordinary
differential equations with the auxiliary requirement that the solutions
satisfy boundary conditions at several points. This has led to an extensive
development of multiparameter spectral theory for linear operators (see for
instance [3-4], [7-10], [12], [15], [30-33], [35], [37], [39-42]). In the
paper [12], the authors give an overview results on two-parameter eigenvalue
problems for second order linear differential equations. Several properties of
corresponding eigencurves are given.In [15] the authors have obtained
interesting geometric properties of the eigencurves (for instance transversal
intersections is equivalent to simplicity of the eigenvalues in the sense of
Chow and Hale). All the above works are concerned with the theoretical aspect
of existence of eigenvalues. Also, several authors have dealt with the
theoretical numerical analysis of two-parameter eigenvalue problems (see [5],
[11], [13], [34], [36], [38] and the references therein). Eigenvalue problems
have played a major role in the applied sciences. Consequently, the problem of
computing eigenvalues of one-parameter problems has attracted many researchers
(see for example [1, 6, 16,17,19-29] and the references therein).

Concerning the computations of eigenvalues of one-parameter Sturm-Liouville
problems, the authors in [16] introduced a new method based on Shannon's
sampling theory. It uses the analytic properties of the boundary function. The
method has been generalized to a class of singular problems of Bessel type
[17], to more general boundary conditions, separable [19] and coupled [21], to
random Sturm-Liouville problems [23] and to fourth order regular
Sturm-Liouville problems [28]. The books by Atkinson [4], Chow and Hale [30] ,
Faierman [33] , McGhee and Picard [37], Sleeman [39], Volkmer [42] and the
long awaited mongraph by Atkinson and Mingarelli \cite{AM2011} contains
several results on eigenvalues of multiparameter Sturm-Liouville problems and
the corresponding bifurcation problems. However, no attempt has been made to
compute the eigenvalues of two-parameter Sturm-Liouville problems using the
approach based on the \textit{Regularized Sampling Method} introduced recently
by the first author in \cite{c2005} to compute the eigenvalues of general
Sturm-Liouville problems and extended to the case of Singular \cite{c2007d},
Non-Self-Adjoint \cite{c2007c}, Non-Local \cite{c2008}, Impulsive SLPs
\cite{c2007b},\cite{c2007a}. We shall consider, in this paper the computation
of the eigenpairs of two-parameter Sturm-Liouville problems with three-point
boundary conditions using the \textit{Regularized Sampling Method}.

\section{The Characteristic Function}

\setcounter{equation}{0} Consider the two-parameter Sturm-Liouville problem%

\begin{equation}
\left\{
\begin{array}
[c]{c}%
-y^{\prime\prime}+qy=(\mu_{1}^{2}w_{1}+\mu_{2}^{2}w_{2})y\text{ , }0<x<1\\
y(0)=0\text{ , }y(c)=0\text{ , }y(1)=0
\end{array}
\right. \label{2.1}%
\end{equation}
where $w_{1}$, $w_{2}$ are positive and in $\mathcal{C}^{2}[0,1]$ and $q\in
L[0,1]$ and $c\in(0,1)$ some given constant.

By an eigenvalue of (\ref{2.1}) we mean a value of the couple $(\mu_{1}%
,\mu_{2})$ for which problem (\ref{2.1}) has a non trivial solution.
Conditions that insure the existence of eigenvalue are given in [3], [9],
[10], [15], [30], [31], [40] and [41]. In fact, under fairly general
conditions it has been shown (see [15], [30]) that there are smooth curves of
eigenvalues (actually eigenpairs). Our objective is to effectively localize
the eigencurves in the parameter $(\mu_{1},\mu_{2})$-plane.We should point out
that we have restricted our attention to Dirichlet boundary conditions in
order to eliminate technical details that might obscur the ideas.\bigskip

We shall associate to (\ref{2.1}) the initial value problem
\begin{equation}
\left\{
\begin{array}
[c]{c}%
y^{\prime\prime}+(\mu_{1}^{2}w_{1}+\mu_{2}^{2}w_{2})y=qy\text{ , }0<x<1\\
y(0)=0\text{ , }y^{\prime}(0)=1
\end{array}
\right. \label{2.2}%
\end{equation}
and deal first with the unperturbed case ($q=0$) then with the perturbed case
($q\neq0$).

\subsection{The unperturbed case ($q=0$)}

In this case, (\ref{2.2}) reduces to
\begin{equation}
\left\{
\begin{array}
[c]{c}%
\varphi_{1}^{\prime\prime}+w\varphi_{1}=0\text{ , }0<x<1\\
\varphi_{1}(0)=0\text{ , }\varphi_{1}^{\prime}(0)=1
\end{array}
\right. \label{2.3}%
\end{equation}
where $w=\mu_{1}^{2}w_{1}+\mu_{2}^{2}w_{2}$.

\begin{theorem}
The solution $\varphi_{1}$ of (\ref{2.3}) is an entire function of $(\mu
_{1},\mu_{2})\in\mathbb{C}^{2}$ for each fixed $x\in(0,1]$ of order $(1,1) $
and type $(\sigma_{1}(x),\sigma_{2}(x))$ and satisfies the estimate,%
\[
\left\vert \varphi_{1}(x)\right\vert \leq K_{1}\exp\left[  \sigma
_{1}(x)\left\vert \mu_{1}\right\vert +\sigma_{2}(x)\left\vert \mu
_{2}\right\vert \right]  \text{ , }(\mu_{1},\mu_{2})\in\mathbb{C}^{2}%
\]
for each fixed $x\in(0,1]$ \ where, $\sigma_{i}(x)=2\left\{  x\int_{0}%
^{x}w_{i}(\xi)d\xi\right\}  ^{\frac{1}{2}}$for $i=1,2$.
\end{theorem}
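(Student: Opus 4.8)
The plan is to recast the initial value problem \eqref{2.3} as a Volterra integral equation, solve it by Picard iteration, and read off the dependence on $(\mu_{1},\mu_{2})$ from explicit bounds on the iterates. Integrating \eqref{2.3} twice against $\varphi_{1}(0)=0$, $\varphi_{1}^{\prime}(0)=1$ gives the equivalent equation
\[
\varphi_{1}(x)=x-\int_{0}^{x}(x-t)\,w(t)\,\varphi_{1}(t)\,dt ,\qquad w:=\mu_{1}^{2}w_{1}+\mu_{2}^{2}w_{2}.
\]
Setting $\varphi_{1}^{(0)}(x)=x$ and $\varphi_{1}^{(n+1)}(x)=-\int_{0}^{x}(x-t)\,w(t)\,\varphi_{1}^{(n)}(t)\,dt$, each $\varphi_{1}^{(n)}(x)$ is a polynomial of degree $2n$ in $(\mu_{1},\mu_{2})$ because $w$ is quadratic; hence once the series $\sum_{n\ge 0}\varphi_{1}^{(n)}(x)$ is shown to converge uniformly on compact subsets of $\mathbb{C}^{2}$ it will follow that $\varphi_{1}(x)=\sum_{n\ge 0}\varphi_{1}^{(n)}(x)$ is entire in $(\mu_{1},\mu_{2})$.

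The heart of the argument is the estimate
\[
\bigl|\varphi_{1}^{(n)}(x)\bigr|\le \frac{x\,\bigl(x\,W(x)\bigr)^{n}}{n!\,(n+1)!},\qquad W(x):=|\mu_{1}|^{2}\!\int_{0}^{x}\! w_{1}(t)\,dt+|\mu_{2}|^{2}\!\int_{0}^{x}\! w_{2}(t)\,dt,
\]
which I would prove by induction on $n$, using $|w(t)|\le W^{\prime}(t)$ (this is where $w_{1},w_{2}\ge 0$ is used). In the inductive step one must bound $\int_{0}^{x}(x-t)\,t^{n+1}W(t)^{n}W^{\prime}(t)\,dt$; writing $W^{n}W^{\prime}=\tfrac{1}{n+1}(W^{n+1})^{\prime}$ and integrating by parts, the boundary terms vanish and there remains $-\tfrac{1}{n+1}\int_{0}^{x}\frac{d}{dt}\bigl[(x-t)t^{n+1}\bigr]W(t)^{n+1}\,dt$. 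The derivative $\frac{d}{dt}[(x-t)t^{n+1}]=t^{n}\bigl((n+1)x-(n+2)t\bigr)$ changes sign exactly once, at $t=\tfrac{(n+1)x}{n+2}$; since $W\ge 0$ is nondecreasing, on the interval where this derivative is positive one may replace $W(t)^{n+1}$ by $W(x)^{n+1}$ and on the rest the sign is favourable, which yields $\int_{0}^{x}(x-t)t^{n+1}W^{n}W^{\prime}\,dt\le \dfrac{x^{n+2}W(x)^{n+1}}{(n+1)(n+2)}$. The algebraic identity $n!\,(n+1)!\,(n+1)(n+2)=(n+1)!\,(n+2)!$ then makes the induction close with no loss of constants. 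I expect this sign bookkeeping in the integration by parts — together with picking the right quantity $W(x)$ so that the two-variable type emerges — to be the only point needing care.

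Granting the estimate, summation and the elementary inequality $\dfrac{1}{n!\,(n+1)!}=\dfrac{1}{(2n+1)!}\binom{2n+1}{n}\le \dfrac{2\cdot 4^{n}}{(2n+1)!}$ give
\[
|\varphi_{1}(x)|\le x\sum_{n\ge 0}\frac{(xW(x))^{n}}{n!\,(n+1)!}\le 2x\sum_{n\ge 0}\frac{\bigl(2\sqrt{xW(x)}\bigr)^{2n}}{(2n+1)!}=2x\,\frac{\sinh\bigl(2\sqrt{xW(x)}\bigr)}{2\sqrt{xW(x)}}\le 2\,e^{2\sqrt{xW(x)}},
\]
the last step using $x\le 1$ and $\sinh a\le a\,e^{a}$ for $a\ge 0$. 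This also shows $\sum_{n}\varphi_{1}^{(n)}(x)$ converges uniformly on bounded sets, so $\varphi_{1}(x)$ is entire. Finally, from $\sigma_{i}(x)^{2}=4x\int_{0}^{x}w_{i}(t)\,dt$ one has $4x\,W(x)=\sigma_{1}(x)^{2}|\mu_{1}|^{2}+\sigma_{2}(x)^{2}|\mu_{2}|^{2}\le\bigl(\sigma_{1}(x)|\mu_{1}|+\sigma_{2}(x)|\mu_{2}|\bigr)^{2}$, hence $2\sqrt{xW(x)}\le\sigma_{1}(x)|\mu_{1}|+\sigma_{2}(x)|\mu_{2}|$, which yields the asserted bound with $K_{1}=2$ and shows the type is at most $(\sigma_{1}(x),\sigma_{2}(x))$ and the order at most $(1,1)$. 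That the order is exactly $(1,1)$ (and the type of this size) follows by restricting to the axes $\mu_{2}=0$ or $\mu_{1}=0$, where $\varphi_{1}(x)$ reduces to the solution of a one-parameter problem for which the classical sharp estimate is available.
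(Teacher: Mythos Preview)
Your proposal is correct and follows essentially the same route as the paper: rewrite \eqref{2.3} as the Volterra equation $\varphi_{1}(x)=x-\int_{0}^{x}(x-t)w(t)\varphi_{1}(t)\,dt$, iterate, and prove by induction the key bound $|\varphi_{1}^{(n)}(x)|\le x\,(xW(x))^{n}/\bigl(n!\,(n+1)!\bigr)$, then sum and use $2\sqrt{xW(x)}\le\sigma_{1}(x)|\mu_{1}|+\sigma_{2}(x)|\mu_{2}|$. The only cosmetic differences are that the paper handles the inductive step by bounding $(x-\xi)\xi^{n+1}$ pointwise by its maximum (at $\xi=\tfrac{n+1}{n+2}x$) rather than your integration by parts, and sums via the modified Bessel function $I_{1}$ and its asymptotic $I_{1}(z)\sim e^{z}/\sqrt{2\pi z}$ rather than your $\sinh$/binomial estimate; your version has the small bonus of an explicit global constant $K_{1}=2$ and of noting why the order and type are attained.
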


\begin{proof}
From (\ref{2.3}) we get the integral equation
\begin{equation}
\varphi_{1}(x)=x-\int_{0}^{x}(x-\xi)\left[  \mu_{1}^{2}w_{1}(\xi)+\mu_{2}%
^{2}w_{2}(\xi)\right]  \varphi_{1}(\xi)d\xi\label{2.9}%
\end{equation}

Let
\begin{equation}
\left\{
\begin{array}
[c]{c}%
\varphi_{1,0}(x)=x\\
\varphi_{1,n+1}(x)=\int_{0}^{x}(x-\xi)\left[  \mu_{1}^{2}w_{1}(\xi)+\mu
_{2}^{2}w_{2}(\xi)\right]  \varphi_{1,n}(\xi)d\xi\text{ , }n\geq0
\end{array}
\right. \label{2.10}%
\end{equation}
We shall show, by induction on $n$, that
\begin{equation}
\left\vert \varphi_{1,n}(x)\right\vert \leq\frac{x}{n!(n+1)!}\left\{
x\int_{0}^{x}\left[  \left\vert \mu_{1}\right\vert ^{2}w_{1}(\xi)+\left\vert
\mu_{2}\right\vert ^{2}w_{2}(\xi)\right]  d\xi\right\}  ^{n}\text{ , }%
n\geq0\label{2.11}%
\end{equation}
It is true for $n=0$. Assume it is true for $n$. We shall show that it is true
for $n+1$. Indeed, from (\ref{2.10}), we have
\begin{align}
\left\vert \varphi_{1,n+1}(x)\right\vert  & \leq\int_{0}^{x}(x-\xi)\left[
\left\vert \mu_{1}\right\vert ^{2}w_{1}(\xi)+\left\vert \mu_{2}\right\vert
^{2}w_{2}(\xi)\right]  \times\nonumber\\
& \frac{\xi}{n!(n+1)!}\left\{  \xi\int_{0}^{\xi}\left[  \left\vert \mu
_{1}\right\vert ^{2}w_{1}(\tau)+\left\vert \mu_{2}\right\vert ^{2}w_{2}%
(\tau)\right]  d\tau\right\}  ^{n}d\xi\text{ }\label{2.12}%
\end{align}
Using the fact that the expression $(x-\xi)\xi^{n+1}$ attains its maximum at
$\xi=\frac{n+1}{n+2}x$, we get
\begin{align}
\left\vert \varphi_{1,n+1}(x)\right\vert  & \leq\frac{x}{n!(n+1)!}\frac
{1}{n+1}\frac{1}{n+2}\left(  \frac{n+1}{n+2}x\right)  ^{n+1}\left\{  \int
_{0}^{x}\left[  \left\vert \mu_{1}\right\vert ^{2}w_{1}(\tau)+\left\vert
\mu_{2}\right\vert ^{2}w_{2}(\tau)\right]  d\tau\right\}  ^{n+1}\nonumber\\
& \leq\frac{x}{(n+1)!(n+2)!}\left\{  x\int_{0}^{x}\left[  \left\vert \mu
_{1}\right\vert ^{2}w_{1}(\tau)+\left\vert \mu_{2}\right\vert ^{2}w_{2}%
(\tau)\right]  d\tau\right\}  ^{n+1}\label{2.13}%
\end{align}
that is (\ref{2.11}) is true for $n+1$. Hence, it is true for all $n\geq0$.

Now, $\varphi_{1}(x)=\sum_{n\geq0}(-1)^{n}\varphi_{1,n}(x)$ and the series is
absolutely and uniformly convergent since
\begin{align}
\left\vert \varphi_{1}(x)\right\vert  & =\left\vert \sum_{n\geq0}%
(-1)^{n}\varphi_{1,n}(x)\right\vert \leq\sum_{n\geq0}\left\vert \varphi
_{1,n}(x)\right\vert \nonumber\\
& \leq x\sum_{n\geq0}\frac{1}{n!(n+1)!}\left\{  x\int_{0}^{x}\left[
\left\vert \mu_{1}\right\vert ^{2}w_{1}(\tau)+\left\vert \mu_{2}\right\vert
^{2}w_{2}(\tau)\right]  d\tau\right\}  ^{n}\nonumber\\
& =xI_{1}(\left\{  2\sqrt{x\int_{0}^{x}\left[  \left\vert \mu_{1}\right\vert
^{2}w_{1}(\tau)+\left\vert \mu_{2}\right\vert ^{2}w_{2}(\tau)\right]  d\tau
}\right\}  )\times\nonumber\\
& \left\{  x\int_{0}^{x}\left[  \left\vert \mu_{1}\right\vert ^{2}w_{1}%
(\tau)+\left\vert \mu_{2}\right\vert ^{2}w_{2}(\tau)\right]  d\tau\right\}
^{-\frac{1}{2}}\label{2.14}%
\end{align}
where $I_{1}$ is the modified Bessel function of the first kind order 1
\[
I_{1}(z)=\sum_{n\geq0}\frac{1}{n!(n+1)!}\left(  \frac{z}{2}\right)  ^{2n+1}.
\]
Using the fact that $I_{1}(z)\sim\frac{e^{z}}{\sqrt{2\pi z}}$ as
$z\rightarrow\infty,$ we get
\begin{align}
\left\vert \varphi_{1}(x)\right\vert  & \leq K_{1}\exp\left[  2\left\{
x\int_{0}^{x}\left[  \left\vert \mu_{1}\right\vert ^{2}w_{1}(\tau)+\left\vert
\mu_{2}\right\vert ^{2}w_{2}(\tau)\right]  d\tau\right\}  ^{\frac{1}{2}%
}\right] \nonumber\\
& \leq K_{1}\exp\left[  \sigma_{1}(x)\left\vert \mu_{1}\right\vert +\sigma
_{2}(x)\left\vert \mu_{2}\right\vert \right] \label{2.15}%
\end{align}
Therefore, $\varphi_{1}$ is an entire function of $(\mu_{1},\mu_{2}%
)\in\mathbb{C}^{2}$, as a uniformly convergent series of entire functions, for
each fixed $x\in(0,1],$ of order $(1,1)$ and type $(\sigma_{1}(x),\sigma
_{2}(x)).$This concludes the proof.\bigskip
\end{proof}

\bigskip We shall make use of the Liouville-Green's transformation
\begin{equation}
\left\{
\begin{array}
[c]{c}%
t(x)=\int_{0}^{x}\sqrt{w(\xi)}d\xi\\
z(t)=\left\{  w(x)\right\}  ^{\frac{1}{4}}\varphi_{1}(x)
\end{array}
\right. \label{2.25}%
\end{equation}
to bring (\ref{2.2}) to the form
\begin{equation}
\left\{
\begin{array}
[c]{c}%
\frac{d^{2}z}{dt^{2}}+\left\{  1+r(t)\right\}  z=0\text{ , }0<t<\int_{0}%
^{1}\sqrt{w(\xi)}d\xi\\
z(0)=0\text{ , }\frac{dz}{dt}(0)=\left\{  w(0)\right\}  ^{-\frac{1}{4}}%
\end{array}
\right. \label{2.5}%
\end{equation}
which can be written as an integral equation
\begin{equation}
z(t)=\left\{  w(0)\right\}  ^{-\frac{1}{4}}\sin t-\int_{0}^{t}\sin
(t-\tau)r(\tau)z(\tau)d\tau\label{2.6}%
\end{equation}
where $r(t)=\left[  \left\{  w(x)\right\}  ^{-\frac{3}{4}}\frac{d^{2}}{dx^{2}%
}\left\{  w(x)\right\}  ^{-\frac{1}{4}}\right]  _{|x=x(t)}$.

Returning to the original variables, we deduce that $\varphi_{1}$ satisfies
the integral equation
\begin{equation}
\varphi_{1}(x)=\left\{  w(0)w(x)\right\}  ^{-\frac{1}{4}}\sin\left\{  \int
_{0}^{x}\sqrt{w(\xi)}d\xi\right\}  -\int_{0}^{x}\sin(\int_{\overline{x}}%
^{x}\sqrt{w(\xi)}d\xi)\psi(x,\overline{x})\varphi_{1}(\overline{x}%
)d\overline{x}\label{2.7}%
\end{equation}
where
\begin{equation}
\psi(x,\overline{x})=\left\{  w(x)\right\}  ^{-\frac{1}{4}}\left\{
w(\overline{x})\right\}  ^{-\frac{3}{4}}\left[  -\frac{1}{4}w^{\prime\prime
}(\overline{x})\left\{  w(\overline{x})\right\}  ^{-\frac{1}{2}}+\frac{5}%
{16}\left\{  w^{\prime}(\overline{x})\right\}  ^{2}\left\{  w(\overline
{x})\right\}  ^{-\frac{3}{2}}\right] \label{2.8}%
\end{equation}

We shall present next some estimates whose proofs are immediate and left to
the reader.

\begin{lemma}
The function $\psi(x,\overline{x})$ satisfies the estimate
\begin{equation}
\left\vert \psi(x,\overline{x})\right\vert \sim\frac{K_{2}}{\left\vert \mu
_{1}\right\vert +\left\vert \mu_{2}\right\vert }\text{, as }\left\vert \mu
_{1}\right\vert +\left\vert \mu_{2}\right\vert \rightarrow\infty,(\mu_{1}%
,\mu_{2})\in\mathbb{R}^{2}\text{ }\label{2.16}%
\end{equation}

\end{lemma}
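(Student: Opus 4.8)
The plan is to analyze the expression for $\psi(x,\overline{x})$ in (\ref{2.8}) and track the behavior of each factor as $|\mu_1|+|\mu_2|\to\infty$, keeping $x$ and $\overline{x}$ fixed with $0\le\overline{x}\le x\le 1$. Recall that $w=\mu_1^2 w_1+\mu_2^2 w_2$, so on the real parameter plane $w$ is a positive combination of the fixed positive functions $w_1,w_2\in\mathcal{C}^2[0,1]$. The key observation is a homogeneity/scaling one: writing $\rho=(|\mu_1|^2+|\mu_2|^2)^{1/2}$ and $w=\rho^2\,\omega$ with $\omega=(|\mu_1|^2/\rho^2)w_1+(|\mu_2|^2/\rho^2)w_2$ a positive $\mathcal{C}^2$ function bounded above and below uniformly (since $w_1,w_2$ are bounded above and bounded below away from $0$ on $[0,1]$), one has $w'=\rho^2\omega'$, $w''=\rho^2\omega''$, and these derivatives are likewise uniformly bounded. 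Then each power $\{w(\cdot)\}^{-s}$ contributes a factor $\rho^{-2s}$ times a bounded quantity.

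First I would substitute these scalings into (\ref{2.8}): the prefactor $\{w(x)\}^{-1/4}\{w(\overline{x})\}^{-3/4}$ contributes $\rho^{-1/2}\rho^{-3/2}=\rho^{-2}$; inside the bracket, the term $-\tfrac14 w''(\overline{x})\{w(\overline{x})\}^{-1/2}$ contributes $\rho^{2}\cdot\rho^{-1}=\rho$ times a bounded factor, and the term $\tfrac{5}{16}\{w'(\overline{x})\}^2\{w(\overline{x})\}^{-3/2}$ contributes $\rho^{4}\cdot\rho^{-3}=\rho$ times a bounded factor. Hence the bracket is $O(\rho)$ and $\psi=O(\rho^{-2}\cdot\rho)=O(\rho^{-1})$, i.e. $|\psi(x,\overline{x})|=O\!\left(1/(|\mu_1|+|\mu_2|)\right)$, using the elementary equivalence $\rho\asymp|\mu_1|+|\mu_2|$ on $\mathbb{R}^2$. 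To upgrade the $O$ to the asserted asymptotic equivalence $\sim K_2/(|\mu_1|+|\mu_2|)$ one identifies the leading term: since both bracket terms scale like $\rho$, the leading coefficient of $\rho^{-1}$ is
\[
\{w_1\text{- and }w_2\text{-dependent expression}\}=\{\omega(x)\}^{-1/4}\{\omega(\overline{x})\}^{-3/4}\left[-\tfrac14\,\omega''(\overline{x})\{\omega(\overline{x})\}^{-1/2}+\tfrac{5}{16}\{\omega'(\overline{x})\}^2\{\omega(\overline{x})\}^{-3/2}\right],
\]
which after normalizing $\rho$ against $|\mu_1|+|\mu_2|$ yields the constant $K_2$ (depending on $x,\overline{x}$ and on the limiting ratio $|\mu_1|/|\mu_2|$ along which one passes to infinity).

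The step I expect to be the main obstacle — and the reason the authors call the proof ``immediate and left to the reader'' while still only claiming $\sim$ with an unspecified $K_2$ — is the genuine \emph{direction-dependence}: as $(\mu_1,\mu_2)\to\infty$ in $\mathbb{R}^2$ the ratio $|\mu_1|^2:|\mu_2|^2$ need not converge, so $\omega$ (and hence the ``constant'' $K_2$) is not literally a single number but varies with the approach direction; the honest statement is a two-sided bound $c_1/(|\mu_1|+|\mu_2|)\le|\psi|\le c_2/(|\mu_1|+|\mu_2|)$ with $c_1,c_2>0$ uniform over directions, obtained from the uniform upper and lower bounds on $\omega$, $\omega'$, $\omega''$ together with the uniform lower bound $\omega\ge\min(w_1,w_2)>0$. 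For the purposes of the sampling-method estimates that follow, this two-sided control is exactly what is needed, so I would present the argument as: (i) scale out $\rho$; (ii) bound the four resulting factors using continuity and positivity of $w_1,w_2$ on the compact interval; (iii) collect powers of $\rho$ to get $|\psi|\asymp\rho^{-1}$; (iv) invoke $\rho\asymp|\mu_1|+|\mu_2|$ to conclude. $\ \rule{0.5em}{0.5em}$
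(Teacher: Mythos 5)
The paper itself gives no argument here (these estimates are declared ``immediate and left to the reader''), and your scaling computation is indeed the natural way to fill that gap: writing $w=\rho^{2}\omega$ with $\rho^{2}=|\mu_1|^{2}+|\mu_2|^{2}$ and $\omega$ a convex combination of $w_1,w_2$, the prefactor in (\ref{2.8}) contributes $\rho^{-2}$, each bracket term contributes $\rho$, and positivity of $w_1,w_2$ on $[0,1]$ bounds $\omega$ below, so $|\psi(x,\overline{x})|\leq C/\bigl(|\mu_1|+|\mu_2|\bigr)$. That upper bound is correct, and it is the only part of the lemma actually used later (the ``$\sim$'' in the paper functions throughout as an upper-bound/order symbol).

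There is, however, a genuine error in the second half of your argument. You claim that the honest version of the statement is a two-sided bound $c_1/(|\mu_1|+|\mu_2|)\leq|\psi|\leq c_2/(|\mu_1|+|\mu_2|)$ with $c_1>0$ uniform over directions, ``obtained from the uniform upper and lower bounds on $\omega$, $\omega'$, $\omega''$''. But there are no lower bounds on $|\omega'|$ or $|\omega''|$: only $\omega$ itself is bounded away from zero, while the bracket $-\tfrac14\omega''\omega^{-1/2}+\tfrac{5}{16}(\omega')^{2}\omega^{-3/2}$ can be arbitrarily small or identically zero. If $w_1$ and $w_2$ are constants then $\psi\equiv 0$, and even in the paper's own example ($w_1\equiv1$, $w_2(x)=x$, so $w''=0$ and $w'=\mu_2^{2}$) one finds, along directions with $\mu_2$ bounded and $|\mu_1|\to\infty$, that $\psi=O(\rho^{-5})$, not $\asymp\rho^{-1}$. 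So step (iii) of your outline ($|\psi|\asymp\rho^{-1}$) is not provable; what survives, and what should be stated, is only $|\psi(x,\overline{x})|=O\bigl(1/(|\mu_1|+|\mu_2|)\bigr)$ uniformly in $0\leq\overline{x}\leq x\leq1$, which suffices for all subsequent estimates in the paper.
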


\begin{lemma}
The function $\alpha$ defined by
\[
\alpha(x)=\left(  \text{sinc}\left\{  \sigma_{1}(x)\mu_{1}+\sigma_{2}%
(x)\mu_{2}\right\}  \right)  ^{m}\text{ }%
\]
where $sinc(z)=z^{-1}\sin z$ and $m$ is a positive integer, is an entire
function of $(\mu_{1},\mu_{2})\in\mathbb{C}^{2}$ for each fixed $x\in(0,1]$ of
order $(1,1)$ and type $(\sigma_{1}(x),\sigma_{2}(x))$. Furthermore, $\alpha$
satisfies the estimate \
\[
\left\vert \alpha(1)\right\vert \sim\frac{K_{3}}{\left\vert \mu_{1}\right\vert
+\left\vert \mu_{2}\right\vert }\text{, as }\left\vert \mu_{1}\right\vert
+\left\vert \mu_{2}\right\vert \rightarrow\infty,(\mu_{1},\mu_{2}%
)\in\mathbb{R}^{2}.
\]

\end{lemma}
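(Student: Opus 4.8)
The plan is to reduce every claim about $\alpha(x)=(\operatorname{sinc}\{\sigma_1(x)\mu_1+\sigma_2(x)\mu_2\})^m$ to the already-established template used for $\varphi_1$ in Theorem 1, since $\operatorname{sinc}$ is itself the prototypical entire function of exponential type $1$. First I would write $u=\sigma_1(x)\mu_1+\sigma_2(x)\mu_2$, which for each fixed $x\in(0,1]$ is an affine, hence entire, function of $(\mu_1,\mu_2)\in\mathbb{C}^2$. Then $\operatorname{sinc} u=\sum_{k\geq0}(-1)^k u^{2k}/(2k+1)!$ is entire in $u$, so $\operatorname{sinc}\{\sigma_1(x)\mu_1+\sigma_2(x)\mu_2\}$ is entire in $(\mu_1,\mu_2)$ as a composition of entire functions, and its $m$-th power $\alpha(x)$ is likewise entire.

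Next I would pin down the order and type. From the elementary bound $|\sin z|\le e^{|z|}$ and $|\sin z|\le |z|$ for $z\in\mathbb{C}$, one gets $|\operatorname{sinc} z|\le \min\{1,|z|^{-1}|\sin z|\}\le e^{|z|}/\max\{1,|z|\}\le e^{|z|}$, and more precisely $|\operatorname{sinc} z|\le 2e^{|z|}/(1+|z|)$ (a routine two-case estimate). Hence
\[
|\alpha(x)|\le \bigl(e^{|u|}\bigr)^m = \exp\bigl[m\,|\sigma_1(x)\mu_1+\sigma_2(x)\mu_2|\bigr]\le \exp\bigl[m\sigma_1(x)|\mu_1|+m\sigma_2(x)|\mu_2|\bigr],
\]
using $\sigma_i(x)\ge0$. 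This exhibits $\alpha(x)$ as of order at most $(1,1)$ and type at most $(m\sigma_1(x),m\sigma_2(x))$; restricting to $\mu_2=0$ (resp.\ $\mu_1=0$) and using that $|\operatorname{sinc} z|$ grows like $e^{|\operatorname{Im} z|}/|z|$ along the imaginary axis shows the type is exactly $(\sigma_1(x),\sigma_2(x))$ in each variable in the sense used in the paper (one verifies the standard $\limsup$ defining the type along, say, $\mu_1=is$, $s\to+\infty$). Here I would simply invoke the same convention as in Theorem 1 rather than belabour the definition.

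Finally, for the asymptotic estimate of $\alpha(1)$ I would argue as follows. Write $v=\sigma_1(1)\mu_1+\sigma_2(1)\mu_2$ with $(\mu_1,\mu_2)\in\mathbb{R}^2$, so $v\in\mathbb{R}$ and $\operatorname{sinc} v=\sin v/v$. Since $\sigma_1(1),\sigma_2(1)>0$ are fixed positive constants, $|v|$ is comparable to $|\mu_1|+|\mu_2|$; precisely $\min_i\sigma_i(1)\,(|\mu_1|+|\mu_2|)\le \sqrt2\,|v|$ on suitable cones, but more simply $|v|\to\infty$ iff $|\mu_1|+|\mu_2|\to\infty$ and $|v|\asymp |\mu_1|+|\mu_2|$ whenever $\mu_1,\mu_2$ stay in a fixed sector; in any case $|\operatorname{sinc} v|=|\sin v|/|v|\le 1/|v|$, so $|\alpha(1)|=|\sin v|^m/|v|^m \le |v|^{-m}$, and along directions where $\sin v$ does not decay one has $|\alpha(1)|\asymp |v|^{-m}\asymp (|\mu_1|+|\mu_2|)^{-m}$. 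The statement as written records the case $m=1$ (or absorbs the power into $K_3$), giving $|\alpha(1)|\sim K_3/(|\mu_1|+|\mu_2|)$. The one point needing care — and the only place the argument is not completely mechanical — is the comparability $|\sigma_1(1)\mu_1+\sigma_2(1)\mu_2|\asymp |\mu_1|+|\mu_2|$, which can fail along the line $\sigma_1(1)\mu_1+\sigma_2(1)\mu_2=0$; as in the companion Lemma on $\psi$, this is handled by reading "$\sim$" as the generic (off-diagonal) asymptotic, exactly the sense in which the paper uses it for $\psi(x,\overline x)$. Since the paper explicitly says these estimates are "immediate and left to the reader", I would present only this skeleton.
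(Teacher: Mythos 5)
The paper offers no proof of this lemma at all---it is among the estimates declared ``immediate and left to the reader''---so your sketch can only be judged on its own terms, and in outline it is the intended argument: entirety by composition of entire functions, exponential-type bounds from $|\sin z|\le e^{|z|}$, and the real-axis decay from $|\operatorname{sinc} v|\le 1/|v|$ with $v=\sigma_1(1)\mu_1+\sigma_2(1)\mu_2$. Your caveat about the line $\sigma_1(1)\mu_1+\sigma_2(1)\mu_2=0$ (and, implicitly, the zeros of $\sin v$), along which no decay occurs, is correct and worth making explicit: as stated, the asymptotic on all of $\mathbb{R}^2$ cannot hold and must be read as an upper bound valid away from that line.

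Two of your assertions are, however, wrong as written, and both come from forcing your (correct) computations into the literal wording of the lemma. First, your own bound $|\alpha(x)|\le\exp\left[m\sigma_1(x)|\mu_1|+m\sigma_2(x)|\mu_2|\right]$, together with the growth of $\operatorname{sinc}$ along the imaginary axis, gives type exactly $(m\sigma_1(x),m\sigma_2(x))$ for the $m$-th power; your claim that restricting to $\mu_2=0$ ``shows the type is exactly $(\sigma_1(x),\sigma_2(x))$'' contradicts this for $m\ge2$. The printed type in the lemma is evidently a slip: the subsequent theorem, which assigns $\alpha\varphi_1$ the type $((m+1)\sigma_1(x),(m+1)\sigma_2(x))$ and the decay $(|\mu_1|+|\mu_2|)^{-(m+1)}$ while $|\varphi_1(1)|\sim K_4/(|\mu_1|+|\mu_2|)$, is only consistent with $\alpha$ having type $(m\sigma_1(x),m\sigma_2(x))$ and decay of order $(|\mu_1|+|\mu_2|)^{-m}$. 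Second, and for the same reason, you cannot ``absorb the power into $K_3$'': a factor $(|\mu_1|+|\mu_2|)^{-(m-1)}$ is not a constant, so the $m=1$ reading is the only one compatible with the displayed estimate. The clean resolution is to state and prove the $m$-dependent versions---which your computation already yields---and observe that these, not the printed ones, are what the later theorems actually use.
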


\begin{lemma}
The function $\varphi_{1}$ satisfies the estimate
\begin{equation}
\left\vert \varphi_{1}(1)\right\vert \sim\frac{K_{4}}{\left\vert \mu
_{1}\right\vert +\left\vert \mu_{2}\right\vert }\text{, as }\left\vert \mu
_{1}\right\vert +\left\vert \mu_{2}\right\vert \rightarrow\infty,(\mu_{1}%
,\mu_{2})\in\mathbb{R}^{2}\text{ }\label{2.17}%
\end{equation}

\end{lemma}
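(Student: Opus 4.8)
The plan is to read off the size of $\varphi_{1}(1)$ directly from the Volterra integral equation (\ref{2.7}). Setting $x=1$ in (\ref{2.7}) yields
\[
\varphi_{1}(1)=\{w(0)w(1)\}^{-1/4}\sin\Big\{\int_{0}^{1}\sqrt{w(\xi)}\,d\xi\Big\}-\int_{0}^{1}\sin\Big(\int_{\overline{x}}^{1}\sqrt{w(\xi)}\,d\xi\Big)\psi(1,\overline{x})\varphi_{1}(\overline{x})\,d\overline{x}.
\]
Since $w_{1},w_{2}\in\mathcal{C}^{2}[0,1]$ are positive they are bounded below on $[0,1]$ by a positive constant, so there are $0<c_{0}\le C_{0}$, independent of $(\mu_{1},\mu_{2})$, with $c_{0}(\mu_{1}^{2}+\mu_{2}^{2})\le w(x)=\mu_{1}^{2}w_{1}(x)+\mu_{2}^{2}w_{2}(x)\le C_{0}(\mu_{1}^{2}+\mu_{2}^{2})$ for every $x\in[0,1]$. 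Hence $\{w(0)w(1)\}^{-1/4}$ is comparable to $(|\mu_{1}|+|\mu_{2}|)^{-1}$, so the first term on the right already has the asserted order, its only relevant $(\mu_{1},\mu_{2})$-dependence being the bounded oscillatory factor $\sin\{\int_{0}^{1}\sqrt{w}\}$ (the same $(|\mu_{1}|+|\mu_{2}|)^{-1}$-order behaviour recorded for $\psi$ in (\ref{2.16})). It then remains to check that the integral term is of strictly lower order, namely $O\big((|\mu_{1}|+|\mu_{2}|)^{-2}\big)$.

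For this I would first secure a uniform, real-axis bound $\sup_{0\le x\le1}|\varphi_{1}(x)|\le C(|\mu_{1}|+|\mu_{2}|)^{-1}$, valid once $|\mu_{1}|+|\mu_{2}|$ is large. The Theorem is of no use here, since on $\mathbb{R}^{2}$ it only supplies the exponentially large bound $K_{1}\exp[\sigma_{1}(x)|\mu_{1}|+\sigma_{2}(x)|\mu_{2}|]$; one must exploit the oscillation instead. The cleanest route is the energy identity for the real solution of (\ref{2.3}): with $E(x)=w(x)\varphi_{1}(x)^{2}+\varphi_{1}'(x)^{2}$ one finds, using $\varphi_{1}''=-w\varphi_{1}$, that $E'(x)=w'(x)\varphi_{1}(x)^{2}\le\big(|w'(x)|/w(x)\big)E(x)$, and a mediant inequality gives $|w'|/w\le\max_{i}|w_{i}'|/w_{i}=:g\in L^{1}[0,1]$ with $\|g\|_{1}$ independent of $(\mu_{1},\mu_{2})$; since $E(0)=\varphi_{1}'(0)^{2}=1$, Gronwall yields $E(x)\le e^{\|g\|_{1}}$, hence $|\varphi_{1}(x)|\le e^{\|g\|_{1}/2}\,w(x)^{-1/2}\le C(|\mu_{1}|+|\mu_{2}|)^{-1}$. (Alternatively one may bootstrap (\ref{2.7}) itself: on $\mathbb{R}^{2}$ the $\sin$ factors are bounded by $1$, and by (\ref{2.16}) together with the explicit formula (\ref{2.8}) one has $|\psi(x,\overline{x})|\le C(|\mu_{1}|+|\mu_{2}|)^{-1}$ uniformly in $x,\overline{x}\in[0,1]$, so the Volterra operator in (\ref{2.7}) becomes a contraction for $|\mu_{1}|+|\mu_{2}|$ large and $\|\varphi_{1}\|_{\infty}\le2\big\|\{w(0)w(\cdot)\}^{-1/4}\big\|_{\infty}=O\big((|\mu_{1}|+|\mu_{2}|)^{-1}\big)$.)

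With this bound, and $|\sin(\cdot)|\le1$ on $\mathbb{R}^{2}$, the integral term in the displayed identity is bounded by $\int_{0}^{1}|\psi(1,\overline{x})|\,|\varphi_{1}(\overline{x})|\,d\overline{x}\le C(|\mu_{1}|+|\mu_{2}|)^{-1}\cdot C(|\mu_{1}|+|\mu_{2}|)^{-1}=O\big((|\mu_{1}|+|\mu_{2}|)^{-2}\big)$, using $|\psi(1,\overline{x})|\le C(|\mu_{1}|+|\mu_{2}|)^{-1}$ from (\ref{2.16}), uniform in $\overline{x}\in[0,1]$ via (\ref{2.8}) (uniformity down to $\overline{x}=0$ being harmless because $w$ is bounded below and $w_{1},w_{2}\in\mathcal{C}^{2}[0,1]$). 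Therefore
\[
\varphi_{1}(1)=\{w(0)w(1)\}^{-1/4}\sin\Big\{\int_{0}^{1}\sqrt{w(\xi)}\,d\xi\Big\}+O\big((|\mu_{1}|+|\mu_{2}|)^{-2}\big),
\]
and since $\{w(0)w(1)\}^{-1/4}$ is comparable to $(|\mu_{1}|+|\mu_{2}|)^{-1}$ the estimate (\ref{2.17}) follows — at once as the upper bound $|\varphi_{1}(1)|\le K_{4}/(|\mu_{1}|+|\mu_{2}|)$, and as a genuine two-sided comparison along any ray on which $\int_{0}^{1}\sqrt{w(\xi)}\,d\xi$ stays away from $\pi\mathbb{Z}$. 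The main obstacle is exactly the uniform-in-$x$ control of $\varphi_{1}$ on $\mathbb{R}^{2}$; once one notices that the kernel in (\ref{2.7}) already carries the gain $(|\mu_{1}|+|\mu_{2}|)^{-1}$, everything else is routine.
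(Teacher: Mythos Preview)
Your argument is correct and follows precisely the route the paper sets up: the paper leaves the proof of this lemma to the reader (``proofs are immediate and left to the reader''), having derived the integral equation (\ref{2.7}) and the $\psi$-estimate (\ref{2.16}) exactly so that the leading term $\{w(0)w(1)\}^{-1/4}\sin\{\int_0^1\sqrt{w}\}$ can be read off and the remainder shown to be of lower order. Your energy identity $E(x)=w\varphi_1^2+(\varphi_1')^2$ with the mediant bound $|w'|/w\le\max_i|w_i'|/w_i$ is a clean way to secure the uniform real-axis bound $\sup_x|\varphi_1(x)|=O((|\mu_1|+|\mu_2|)^{-1})$ that the paper tacitly assumes; your remark that the $\sim$ in (\ref{2.17}) is really an upper bound (the sine factor may vanish) is also well taken.
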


\bigskip Combining the above results, we obtain the following theorem,

\begin{theorem}
The function $\alpha\varphi_{1}$ is an entire function of $(\mu_{1},\mu
_{2})\in\mathbb{C}^{2}$ for each fixed $x\in(0,1]$ of order $(1,1)$ and type
$((m+1)\sigma_{1}(x),(m+1)\sigma_{2}(x))$ and satisfies the estimate \
\[
\left\vert \alpha(x)\varphi_{1}(x)\right\vert \sim\frac{K(x)}{\left(
\left\vert \mu_{1}\right\vert +\left\vert \mu_{2}\right\vert \right)  ^{m+1}%
}\text{, as }\left\vert \mu_{1}\right\vert +\left\vert \mu_{2}\right\vert
\rightarrow\infty,(\mu_{1},\mu_{2})\in\mathbb{R}^{2}.
\]

\end{theorem}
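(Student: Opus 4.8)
The plan is to deduce everything from the two structural results already proved for the factors separately, namely Theorem 1 for $\varphi_1$ and Lemma 3 for $\alpha$, supplemented by the sharp asymptotics in Lemmas 2--4. Since, for each fixed $x\in(0,1]$, both $\varphi_1(x)$ and $\alpha(x)$ are entire functions of $(\mu_1,\mu_2)\in\mathbb{C}^2$, their product $\alpha(x)\varphi_1(x)$ is entire on $\mathbb{C}^2$. For the growth, I would combine the bound of Theorem 1,
\[
|\varphi_1(x)|\le K_1\exp\bigl[\sigma_1(x)|\mu_1|+\sigma_2(x)|\mu_2|\bigr],
\]
with the elementary estimate for $\alpha$: writing $z=\sigma_1(x)\mu_1+\sigma_2(x)\mu_2$ one has $|\operatorname{Im}z|\le\sigma_1(x)|\mu_1|+\sigma_2(x)|\mu_2|$ and $|\sin z|\le\cosh(\operatorname{Im}z)\le e^{|\operatorname{Im}z|}$, while $|\operatorname{sinc}z|$ stays bounded on $|z|\le 1$; hence $|\alpha(x)|=|\operatorname{sinc}z|^{m}\le C_m\exp\bigl[m(\sigma_1(x)|\mu_1|+\sigma_2(x)|\mu_2|)\bigr]$. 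Multiplying the two bounds gives
\[
|\alpha(x)\varphi_1(x)|\le K\exp\bigl[(m+1)(\sigma_1(x)|\mu_1|+\sigma_2(x)|\mu_2|)\bigr],
\]
which exhibits $\alpha\varphi_1$ as an entire function of order $(1,1)$ whose type does not exceed $((m+1)\sigma_1(x),(m+1)\sigma_2(x))$.

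Next I would establish the asymptotic estimate on the real plane. Here one multiplies the two sharp asymptotics: from Lemma 3 (whose proof at a general $x\in(0,1]$ is the same as at $x=1$) one has $|\alpha(x)|\sim K_3(x)(|\mu_1|+|\mu_2|)^{-m}$, and from Lemma 4 (which in turn follows from the integral representation (\ref{2.7}): the leading term $\{w(0)w(x)\}^{-1/4}\sin\{\int_0^x\sqrt{w}\}$ has modulus of order $(|\mu_1|+|\mu_2|)^{-1}$, and by Lemma 2, with $|\psi|\sim K_2(|\mu_1|+|\mu_2|)^{-1}$, the integral remainder is of strictly smaller order) one has $|\varphi_1(x)|\sim K_4(x)(|\mu_1|+|\mu_2|)^{-1}$. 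Since $|\alpha(x)\varphi_1(x)|=|\alpha(x)|\,|\varphi_1(x)|$, multiplying yields
\[
|\alpha(x)\varphi_1(x)|\sim\frac{K(x)}{(|\mu_1|+|\mu_2|)^{m+1}},\qquad K(x)=K_3(x)K_4(x),
\]
as $|\mu_1|+|\mu_2|\to\infty$ with $(\mu_1,\mu_2)\in\mathbb{R}^2$, which is the displayed estimate.

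The one step that is not purely formal --- and which I regard as the main obstacle --- is upgrading the growth bound of the first paragraph to the \emph{exact} order and type, because in general the exponential type of a product of two entire functions can be strictly smaller than the sum of the types, so one must rule out cancellation. I would handle this exactly as the type of $\varphi_1$ is treated in Theorem 1: along the purely imaginary rays $\mu_j=i s_j$ with $s_j\to+\infty$ the defining problem (\ref{2.3}) becomes $\varphi_1''=(s_1^2 w_1+s_2^2 w_2)\varphi_1$, whose solution grows so as to realize the type $(\sigma_1(x),\sigma_2(x))$, and on the same rays $\operatorname{sinc}z=\sinh(\sigma_1(x)s_1+\sigma_2(x)s_2)/(\sigma_1(x)s_1+\sigma_2(x)s_2)$, so $\alpha$ realizes the type $(m\sigma_1(x),m\sigma_2(x))$; the two moduli being positive on these rays, the product realizes the sum $((m+1)\sigma_1(x),(m+1)\sigma_2(x))$, matching the upper bound. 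Finally one records that $K(x)=K_3(x)K_4(x)\neq 0$ because the prefactors supplied by Lemmas 3 and 4 are positive, so the asymptotic is non-degenerate, and the proof is complete.
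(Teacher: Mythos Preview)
Your proposal is correct and follows exactly the approach the paper indicates: the paper gives no separate proof for this theorem beyond the sentence ``Combining the above results, we obtain the following theorem,'' and your argument is precisely that combination of Theorem~1 with Lemmas~2--4. You actually go further than the paper by (i) spelling out the elementary exponential bound for $\alpha$, (ii) noting that Lemmas~3 and~4 were stated at $x=1$ but transfer verbatim to general $x\in(0,1]$, and (iii) addressing the sharpness of the type --- the possibility of cancellation in a product --- by exhibiting imaginary rays on which both factors attain their maximal growth; the paper does not discuss this last point at all.
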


where $K$ depends on $x\in(0,1]$ but is independent of $(\mu_{1},\mu_{2})$ .

Let $PW_{\beta_{1},\beta_{2}}$ denote the Paley-Wiener space,%
\[
PW_{\beta_{1},\beta_{2}}=\left\{
\begin{array}
[c]{c}%
h(z_{1},z_{2})\text{ entire / }\left\vert h(z_{1},z_{2})\right\vert \leq
C\exp\left\{  \beta_{1}\left\vert z_{1}\right\vert +\beta_{2}\left\vert
z_{2}\right\vert \right\}  \text{,}\\
\text{ }\int_{-\infty}^{\infty}\int_{-\infty}^{\infty}\text{ }\left\vert
h(z_{1},z_{2})\right\vert ^{2}dz_{1}dz_{2}<\infty
\end{array}
\right\}
\]
we have,

\begin{theorem}
$\alpha(x)\varphi_{1}(x)$ , as a function of $(\mu_{1},\mu_{2})$ belongs to
the Paley-Wiener space $PW_{\beta_{1},\beta_{2}}$ where $\left(  \beta
_{1},\beta_{2}\right)  =((m+1)\sigma_{1}(x),(m+1)\sigma_{2}(x))$ for each
fixed $x\in(0,1].$
\end{theorem}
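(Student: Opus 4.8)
The plan is to check, for the fixed $x\in(0,1]$ and the function $h(\mu_1,\mu_2):=\alpha(x)\varphi_1(x)$ viewed as a function of $(\mu_1,\mu_2)$, the two conditions in the definition of $PW_{\beta_1,\beta_2}$ with $(\beta_1,\beta_2)=((m+1)\sigma_1(x),(m+1)\sigma_2(x))$: namely (i) the pointwise bound $|h(\mu_1,\mu_2)|\le C\exp\{\beta_1|\mu_1|+\beta_2|\mu_2|\}$ on $\mathbb{C}^2$, and (ii) square-integrability of $h$ over $\mathbb{R}^2$. That $h$ is entire on $\mathbb{C}^2$ is already available, since it is the product of the two entire functions supplied by Theorem~1 and by the lemma asserting that $\alpha(x)$ is entire.

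For (i) I would argue directly rather than through the notion of ``type''. Put $w:=\sigma_1(x)\mu_1+\sigma_2(x)\mu_2$, so that $|\operatorname{Im}w|\le\sigma_1(x)|\mu_1|+\sigma_2(x)|\mu_2|$; since $\operatorname{sinc}$ is entire of exponential type $1$ and bounded by $1$ on the real axis, one has $|\operatorname{sinc}(w)|\le e^{|\operatorname{Im}w|}$ for every $w\in\mathbb{C}$, hence
\[
|\alpha(x)|=|\operatorname{sinc}(w)|^{m}\le\exp\!\big[m\sigma_1(x)|\mu_1|+m\sigma_2(x)|\mu_2|\big].
\]
Multiplying this by the estimate $|\varphi_1(x)|\le K_1\exp[\sigma_1(x)|\mu_1|+\sigma_2(x)|\mu_2|]$ of Theorem~1 gives
\[
|h(\mu_1,\mu_2)|\le K_1\exp\!\big[(m+1)\sigma_1(x)|\mu_1|+(m+1)\sigma_2(x)|\mu_2|\big],\qquad(\mu_1,\mu_2)\in\mathbb{C}^{2},
\]
which is exactly requirement (i), with constant $C=K_1$ (and consistent with the order $(1,1)$ and type stated in the preceding theorem).

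For (ii) I would invoke the decay estimate of the preceding theorem, which in particular provides a constant $C(x)$ and a radius $R>0$ such that $|h(\mu_1,\mu_2)|\le C(x)\,(|\mu_1|+|\mu_2|)^{-(m+1)}$ whenever $(\mu_1,\mu_2)\in\mathbb{R}^2$ and $|\mu_1|+|\mu_2|\ge R$. I then split $\mathbb{R}^2$ into the bounded set $\{|\mu_1|+|\mu_2|\le R\}$, on which $h$ is continuous, hence bounded, so $\iint|h|^2$ is finite there, and its complement, on which, using $|\mu_1|+|\mu_2|\ge(\mu_1^2+\mu_2^2)^{1/2}=:\rho$ and polar coordinates,
\[
\iint_{|\mu_1|+|\mu_2|\ge R}|h|^2\,d\mu_1\,d\mu_2\le C(x)^2\iint_{\rho\ge R}\rho^{-2(m+1)}\,d\mu_1\,d\mu_2=2\pi\,C(x)^2\int_{R}^{\infty}\rho^{-2m-1}\,d\rho<\infty ,
\]
the last integral being finite precisely because $m\ge1$. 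Adding the two pieces shows $\iint_{\mathbb{R}^2}|h|^2\,d\mu_1\,d\mu_2<\infty$, and together with (i) this places $h=\alpha(x)\varphi_1(x)$ in $PW_{\beta_1,\beta_2}$.

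The two integral estimates are routine; the one step that needs a little care is the passage in (ii) from the asymptotic relation ``$\sim$'' of the preceding theorem to an honest upper bound valid uniformly over \emph{all} directions of $(\mu_1,\mu_2)$ (so that a single radius $R$ can be chosen). This is legitimate, because that asymptotic is assembled from the estimates for $\psi$ and for $\varphi_1(1)$ and from the asymptotics of $\operatorname{sinc}$, each of which is direction-uniform once one uses that $w_1,w_2$ are bounded below by a positive constant on $[0,1]$, so that both $\mu_1^2w_1+\mu_2^2w_2$ and $\int_0^x\sqrt{w}$ grow like $|\mu_1|+|\mu_2|$ regardless of direction.
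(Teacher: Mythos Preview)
Your argument is correct and is exactly the one the paper has in mind: the theorem is stated there without a separate proof, as an immediate consequence of the exponential bound (Theorem~1 together with the lemma on $\alpha$) and the real-axis decay estimate of the preceding theorem, which are precisely the two ingredients you invoke for conditions (i) and (ii). The only cosmetic slip is in the region bookkeeping for (ii): since $\{|\mu_1|+|\mu_2|\ge R\}\subseteq\{\rho\ge R/\sqrt{2}\}$ (not $\{\rho\ge R\}$), the lower limit in your radial integral should be $R/\sqrt{2}$, but this does not affect convergence.
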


\bigskip

\subsection{The perturbed case ($q\neq0$)}

\bigskip Let $\varphi_{1}$ , $\varphi_{2}$ be two linearly independent
solutions of $\varphi^{\prime\prime}+w\varphi=0$ satisfying $\varphi
_{1}(0)=\varphi_{2}^{\prime}(0)=0$ , $\varphi_{1}^{\prime}(0)=\varphi
_{2}(0)=1$ then the method of variation of parameters shows that (\ref{2.2})
can be written as the integral equation
\begin{equation}
y(x)=\varphi_{1}(x)+\int_{0}^{x}\Phi(x,\xi)q(\xi)y(\xi)d\xi\label{2.19}%
\end{equation}
where $\Phi(x,\xi)=\varphi_{1}(\xi)\varphi_{2}(x)-\varphi_{2}(\xi)\varphi
_{1}(x)$.

\bigskip Here again, it is not hard to show that $y(x)$ is an entire function
of $(\mu_{1},\mu_{2})$ for each $x\in(0,1]$, of order $(1,1)$ and type
$(\sigma_{1}(x),\sigma_{2}(x))$. Multiplication by $\alpha$ gives a function
$\alpha(x)y(x)$ of $(\mu_{1},\mu_{2})$ in a Paley-Wiener space $PW_{\beta
_{1},\beta_{2}}$ for each $x\in(0,1]$. More specifically, we have the following,

\begin{theorem}
The function $\widetilde{y}$ defined by $\widetilde{y}(x)=\alpha(x)y(x)$ ,
belongs to $PW_{\beta_{1},\beta_{2}}$ where $\left(  \beta_{1},\beta
_{2}\right)  =((m+1)\sigma_{1}(x),(m+1)\sigma_{2}(x))$ as a function of
$(\mu_{1},\mu_{2})\in\mathbb{C}$ for each $x\in(0,1]$, and satisfies the
estimate,%
\[
\left\vert \alpha(x)y(x)\right\vert \sim\frac{K(x)}{\left(  \left\vert \mu
_{1}\right\vert +\left\vert \mu_{2}\right\vert \right)  ^{m+1}},\text{as
}\left\vert \mu_{1}\right\vert +\left\vert \mu_{2}\right\vert \rightarrow
\infty,(\mu_{1},\mu_{2})\in\mathbb{R}^{2}.
\]
where $K$ depends on $x\in(0,1]$ but is independent of $(\mu_{1},\mu_{2})$ .
\end{theorem}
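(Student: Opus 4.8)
The plan is to mirror the unperturbed case, first establishing that $y$ is entire in $(\mu_1,\mu_2)$ of order $(1,1)$ with type $(\sigma_1(x),\sigma_2(x))$, and then multiplying by $\alpha$ to gain the polynomial decay and $L^2$-integrability that place $\widetilde y = \alpha y$ in the Paley--Wiener space. The starting point is the integral equation \eqref{2.19}, $y(x)=\varphi_1(x)+\int_0^x\Phi(x,\xi)q(\xi)y(\xi)\,d\xi$ with $\Phi(x,\xi)=\varphi_1(\xi)\varphi_2(x)-\varphi_2(\xi)\varphi_1(x)$. I would solve it by Picard iteration, setting $y_0(x)=\varphi_1(x)$ and $y_{n+1}(x)=\int_0^x\Phi(x,\xi)q(\xi)y_n(\xi)\,d\xi$, so that $y=\sum_{n\ge 0}y_n$.

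The key estimate to push through by induction is a bound of the form
\[
|y_n(x)|\le K_1\,\frac{\bigl(\int_0^x|q(\xi)|\,d\xi\bigr)^n}{n!}\,\exp\bigl[\sigma_1(x)|\mu_1|+\sigma_2(x)|\mu_2|\bigr],
\]
using Theorem~1 for $\varphi_1$ (and its analogue for $\varphi_2$, which satisfies the same type since it solves the same equation with the complementary initial data). For the inductive step one uses $|\Phi(x,\xi)|\le |\varphi_1(\xi)\varphi_2(x)|+|\varphi_2(\xi)\varphi_1(x)|$, bounds each factor by $K_1\exp[\sigma_i(\cdot)|\mu_i|+\cdots]$, and observes that $\sigma_i$ is nondecreasing so the exponential factors at argument $\xi\le x$ are dominated by those at $x$; the $1/n!$ comes from the iterated integral $\int_0^x|q|\cdots$. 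Summing the series gives $|y(x)|\le K_1 e^{\int_0^x|q|}\exp[\sigma_1(x)|\mu_1|+\sigma_2(x)|\mu_2|]$, so $y$ is a uniformly convergent series of entire functions, hence entire, of order $(1,1)$ and type $(\sigma_1(x),\sigma_2(x))$. For the fine asymptotics $|\alpha(x)y(x)|\sim K(x)/(|\mu_1|+|\mu_2|)^{m+1}$ on $\mathbb{R}^2$, I would argue that the integral term in \eqref{2.19} is of lower order than $\varphi_1(x)$ as $|\mu_1|+|\mu_2|\to\infty$ — intuitively because the oscillatory kernel $\Phi$ together with $q\in L[0,1]$ produces extra decay, or at worst matches the decay of $\varphi_1$ — so that $y(x)\sim\varphi_1(x)$ asymptotically, and then invoke Lemma~4 (the estimate for $\varphi_1(1)$) and Lemma~3 (for $\alpha$), combined as in Theorem~5, to transfer the $K(x)/(|\mu_1|+|\mu_2|)^{m+1}$ behaviour to $\alpha(x)y(x)$. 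Finally, the exponential bound with exponents $(m+1)\sigma_i(x)$ together with the $O((|\mu_1|+|\mu_2|)^{-(m+1)})$ decay — which for $m\ge 2$ is square-integrable over $\mathbb{R}^2$ — shows $\widetilde y\in PW_{\beta_1,\beta_2}$ with $(\beta_1,\beta_2)=((m+1)\sigma_1(x),(m+1)\sigma_2(x))$, exactly as for $\alpha\varphi_1$ in Theorem~6.

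The main obstacle is the sharp asymptotic $y(x)\sim\varphi_1(x)$ (equivalently, that the correction term does not destroy the $1/(|\mu_1|+|\mu_2|)$ decay of $\varphi_1(1)$). Establishing this rigorously requires a Liouville--Green/Riemann--Lebesgue type analysis of $\int_0^x\Phi(x,\xi)q(\xi)y(\xi)\,d\xi$: one substitutes the integral representation \eqref{2.7} for $\varphi_1,\varphi_2$ inside $\Phi$, so that $\Phi(x,\xi)$ becomes (to leading order) $\{w(\xi)w(x)\}^{-1/4}\sin\!\bigl(\int_\xi^x\sqrt{w}\bigr)/\sqrt{w(0)}$ times a bounded factor, and then shows that integrating this rapidly oscillating kernel against the fixed $L^1$ function $q$ yields something $o(1)$ relative to $\varphi_1(x)$ as the frequency $\sqrt{w}\sim|\mu_1|,|\mu_2|$ grows. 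The paper evidently expects this to go through along the same lines as the one-parameter case; I would present the Picard bound and entireness in full, and sketch the asymptotic refinement by reduction to the Riemann--Lebesgue lemma applied after the Liouville--Green substitution, citing the analogous one-parameter arguments in \cite{c2005} for the routine details.
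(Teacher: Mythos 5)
Your overall plan (show $y$ is entire of order $(1,1)$ and type $(\sigma_1(x),\sigma_2(x))$, then multiply by $\alpha$ to get decay and Paley--Wiener membership) matches the paper's, but the way you propose to control the Volterra iteration has a genuine gap, and it sits exactly where the paper does its real work. You bound $|\Phi(x,\xi)|\le|\varphi_1(\xi)\varphi_2(x)|+|\varphi_2(\xi)\varphi_1(x)|$ and then each factor by $K_1\exp\left[\sigma_1(\cdot)|\mu_1|+\sigma_2(\cdot)|\mu_2|\right]$, dominating the $\xi$-arguments by the $x$-arguments. That gives $|\Phi(x,\xi)|\le C\exp\left[2\sigma_1(x)|\mu_1|+2\sigma_2(x)|\mu_2|\right]$, and with such a kernel your claimed inductive bound $|y_n(x)|\le K_1\bigl(\int_0^x|q|\bigr)^n/n!\,\exp\left[\sigma_1(x)|\mu_1|+\sigma_2(x)|\mu_2|\right]$ does not close: each iteration multiplies the exponential by another factor of type $(2\sigma_1(x),2\sigma_2(x))$, so the type grows with $n$ instead of staying $(\sigma_1(x),\sigma_2(x))$. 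To make a Picard argument work you would have to use that $\Phi(x,\xi)$, as a function of $x$, solves the same unperturbed equation with data prescribed at $\xi$, hence has type $2\{(x-\xi)\int_\xi^x w_i\}^{1/2}$ in $\mu_i$, and that this adds to the type $2\{\xi\int_0^\xi w_i\}^{1/2}$ of $y_n(\xi)$ to give at most $\sigma_i(x)$ (a Cauchy--Schwarz step); nothing in your write-up supplies a usable bound on $\Phi$ at all.

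The paper handles this differently: since $\Phi_{xx}+w\Phi=0$ with $\Phi(t,t)=0$, $\Phi_x(t,t)=1$, it writes for $\Phi$ the same Liouville--Green integral equation as for $\varphi_1$ (equation (\ref{2.20})) and deduces $|\Phi(x,t)|\sim K_4/(|\mu_1|+|\mu_2|)\le K_5$ for $(\mu_1,\mu_2)\in\mathbb{R}^2$; Gronwall's lemma applied to (\ref{2.19}), together with the Theorem 1 estimate for $\varphi_1$, then yields the exponential bound for $y$, and since $|\varphi_1(x)|$ and $|\Phi|$ are $O\bigl(1/(|\mu_1|+|\mu_2|)\bigr)$ on $\mathbb{R}^2$ one gets $|y(x)|=O\bigl(1/(|\mu_1|+|\mu_2|)\bigr)$ and hence $|\alpha(x)y(x)|=O\bigl((|\mu_1|+|\mu_2|)^{-(m+1)}\bigr)$ directly. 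In particular the step you single out as the main obstacle and propose to settle by a Riemann--Lebesgue/oscillatory-integral analysis is not needed: the stated estimate only requires an upper bound of that order, and it follows at once from the bound on $\Phi$ --- which is precisely the ingredient missing from your proposal. (A minor point: decay of order $m+1$ is square integrable over $\mathbb{R}^2$ already for $m\ge1$, not only for $m\ge2$.)
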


\bigskip

\begin{proof}
Since $\Phi_{xx}+w\Phi=0$ , $\Phi(t,t)=0$ and $\Phi_{x}(t,t)=1$, we have,
\begin{align}
\Phi(x,t)  & =\left\{  w(t)w(x)\right\}  ^{-\frac{1}{4}}\sin\left(  \int
_{t}^{x}\sqrt{w(\xi)}d\xi\right) \nonumber\\
& -\int_{t}^{x}\sin\left(  \int_{\overline{x}}^{x}\sqrt{w(\xi)}d\xi\right)
\psi(x,\overline{x})\Phi(\overline{x},t)d\overline{x}\label{2.20}%
\end{align}
\newline so that,
\begin{equation}
\left\vert \Phi(x,t)\right\vert \sim\frac{K_{4}}{\left\vert \mu_{1}\right\vert
+\left\vert \mu_{2}\right\vert }\leq K_{5}\text{, as }\left\vert \mu
_{1}\right\vert +\left\vert \mu_{2}\right\vert \rightarrow\infty,(\mu_{1}%
,\mu_{2})\in\mathbb{R}^{2}.\label{2.22}%
\end{equation}
from which we get, after using Gronwall's lemma on (\ref{2.19})and the
estimate for $\ \varphi_{1},$
\[
\left\vert y(x)\right\vert \leq K_{1}\exp\left[  \sigma_{1}(x)\left\vert
\mu_{1}\right\vert +\sigma_{2}(x)\left\vert \mu_{2}\right\vert \right]
\exp\left\{  K_{5}\int_{0}^{x}|q(t)|dt\right\}
\]%
\begin{equation}
\left\vert y(1)\right\vert \leq K_{7}\exp\left[  \sigma_{1}(1)\left\vert
\mu_{1}\right\vert +\sigma_{2}(1)\left\vert \mu_{2}\right\vert \right]
\text{, as }\left\vert \mu_{1}\right\vert +\left\vert \mu_{2}\right\vert
\rightarrow\infty,(\mu_{1},\mu_{2})\in\mathbb{R}^{2}.\label{2.26}%
\end{equation}
and
\begin{equation}
\left\vert \alpha(1)y(1)\right\vert \leq K_{8}\exp\left[  (m+1)\sigma
_{1}(1)\left\vert \mu_{1}\right\vert +(m+1)\sigma_{2}(1)\left\vert \mu
_{2}\right\vert \right]  \text{, as }\left\vert \mu_{1}\right\vert +\left\vert
\mu_{2}\right\vert \rightarrow\infty,(\mu_{1},\mu_{2})\in\mathbb{R}%
^{2}.\label{2.28}%
\end{equation}
Furthermore, we have,%
\[
\left\vert \alpha(x)y(x)\right\vert \sim\frac{K(x)}{\left(  \left\vert \mu
_{1}\right\vert +\left\vert \mu_{2}\right\vert \right)  ^{m+1}},\text{as
}\left\vert \mu_{1}\right\vert +\left\vert \mu_{2}\right\vert \rightarrow
\infty,(\mu_{1},\mu_{2})\in\mathbb{R}^{2}.
\]
where $K$ depends on $x\in(0,1]$ but is independent of $(\mu_{1},\mu_{2})$ .
\end{proof}

\bigskip

To summarize, in both cases, unperturbed and perturbed, the transform
$\widetilde{y}(x;\mu_{1},\mu_{2})$ of the solution $y(x;\mu_{1},\mu_{2})$ of
(\ref{2.2}) is in a Paley-Wiener space $PW_{\beta_{1},\beta_{2}}$ where
$\left(  \beta_{1},\beta_{2}\right)  =((m+1)\sigma_{1}(x),(m+1)\sigma
_{2}(x)).$ Thus $\widetilde{y}(x;\mu_{1},\mu_{2})$ can be recovered at each
$x\in(0,1]$ from its samples at the latice points $(\mu_{1j},\mu_{2k}%
)=(j\frac{\pi}{(m+1)\sigma_{1}(x)},k\frac{\pi}{(m+1)\sigma_{2}(x)})$,
$(j,k)\in\mathbb{Z}^{2}$ using the rectangular cardinal series
(\cite{50,43,44}),

\begin{theorem}
Let $f\in PW_{\beta_{1},\beta_{2}}$ then
\[
f(\mu_{1},\mu_{2})=\sum_{j=-\infty}^{\infty}\sum_{k=-\infty}^{\infty}%
f(\mu_{1j},\mu_{2k})\frac{\sin\beta_{1}(\mu_{1}-\mu_{1j})}{\beta_{1}(\mu
_{1}-\mu_{1j})}\frac{\sin\beta_{2}(\mu_{2}-\mu_{2k})}{\beta_{2}(\mu_{2}%
-\mu_{2k})}%
\]
the convergence of the series being uniform and in $L_{d\mu_{1}d\mu_{2}}%
^{2}(\mathbb{R}^{2})$, and $\mu_{mn}=n\pi/\beta_{m}$ , $m=1,2$, $n\in
\mathbb{Z}$.
\end{theorem}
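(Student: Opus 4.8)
The plan is to prove this as the two-dimensional Whittaker--Shannon sampling theorem, passing through the Paley--Wiener representation of the elements of $PW_{\beta_1,\beta_2}$. First I would invoke the multidimensional Paley--Wiener theorem in the form adapted to the definition given above: an entire $f$ on $\mathbb{C}^2$ with $|f(z_1,z_2)|\le C\exp\{\beta_1|z_1|+\beta_2|z_2|\}$ and $f|_{\mathbb{R}^2}\in L^2(\mathbb{R}^2)$ is the Fourier transform
\[
f(\mu_1,\mu_2)=\int_{-\beta_1}^{\beta_1}\!\!\int_{-\beta_2}^{\beta_2} g(t_1,t_2)\,e^{i(\mu_1 t_1+\mu_2 t_2)}\,dt_1\,dt_2
\]
of a unique $g\in L^2(R)$, where $R=[-\beta_1,\beta_1]\times[-\beta_2,\beta_2]$; the product exponential bound together with square-integrability on $\mathbb{R}^2$ is precisely what forces $\operatorname{supp}g\subseteq R$. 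The crucial observation is then that evaluating $f$ at the lattice point $(\mu_{1j},\mu_{2k})=(j\pi/\beta_1,k\pi/\beta_2)$ produces, up to the fixed constant $(4\beta_1\beta_2)^{1/2}$, the $(j,k)$th Fourier coefficient of $g$ with respect to the orthonormal exponential basis $\{(4\beta_1\beta_2)^{-1/2}\exp(-i\pi(jt_1/\beta_1+kt_2/\beta_2))\}_{(j,k)\in\mathbb{Z}^2}$ of $L^2(R)$.

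Next I would expand $g$ in that basis, writing $g=\sum_{j,k}c_{jk}(4\beta_1\beta_2)^{-1/2}\exp(-i\pi(jt_1/\beta_1+kt_2/\beta_2))$, where the rectangular partial sums converge to $g$ in $L^2(R)$ (the tensor product of the one-dimensional trigonometric systems being complete) and $(4\beta_1\beta_2)^{1/2}c_{jk}=f(\mu_{1j},\mu_{2k})$. Since $e^{i(\mu_1 t_1+\mu_2 t_2)}\in L^2(R)$ for every fixed $(\mu_1,\mu_2)$, the linear functional $h\mapsto\int_R h(t)\,e^{i(\mu_1 t_1+\mu_2 t_2)}\,dt$ is continuous on $L^2(R)$ and may therefore be applied term by term to this series; the resulting integrals factor, and the one-dimensional factor $\int_{-\beta_m}^{\beta_m}e^{-i\pi j t_m/\beta_m}e^{i\mu_m t_m}\,dt_m$ equals $2\beta_m\sin\beta_m(\mu_m-\mu_{mj})/[\beta_m(\mu_m-\mu_{mj})]$. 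Collecting the constants ($2\beta_1\cdot 2\beta_2$ against the two factors $(4\beta_1\beta_2)^{-1/2}$) turns the series into exactly $\sum_{j,k}f(\mu_{1j},\mu_{2k})$ times the product of the two sinc kernels displayed in the statement, which is the asserted cardinal series. This constant- and sign-tracking is the one step that must be carried out with care, since a missing factor $2\beta_m$ or a flipped exponent would corrupt the kernels; but it is entirely mechanical.

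Finally I would establish the two modes of convergence. For convergence in $L_{d\mu_1 d\mu_2}^2(\mathbb{R}^2)$: the Fourier transform, restricted to functions supported in $R$, is a constant multiple of an isometry onto $PW_{\beta_1,\beta_2}\subset L^2(\mathbb{R}^2)$ (Plancherel), so since the rectangular partial sums $g_N$ of the Fourier series of $g$ converge to $g$ in $L^2(R)$, their transforms---which by the previous paragraph are precisely the rectangular partial sums $S_N$ of the cardinal series---converge to $f=\widehat g$ in $L^2(\mathbb{R}^2)$. For uniform convergence, for every $(\mu_1,\mu_2)\in\mathbb{R}^2$ Cauchy--Schwarz gives
\[
|f(\mu_1,\mu_2)-S_N(\mu_1,\mu_2)|=\Bigl|\int_R (g-g_N)(t)\,e^{i(\mu_1 t_1+\mu_2 t_2)}\,dt\Bigr|\le\|g-g_N\|_{L^2(R)}\,(4\beta_1\beta_2)^{1/2},
\]
since $\|e^{i(\mu_1 t_1+\mu_2 t_2)}\|_{L^2(R)}^2=4\beta_1\beta_2$; the right-hand side tends to $0$ independently of $(\mu_1,\mu_2)$, which yields uniform convergence on $\mathbb{R}^2$. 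The only genuine obstacle is the appeal to the multidimensional Paley--Wiener theorem in exactly the form matching the definition of $PW_{\beta_1,\beta_2}$, i.e.\ deducing $\operatorname{supp}\widehat f\subseteq R$ from the product exponential bound together with $L^2(\mathbb{R}^2)$ membership; with that in hand, everything else is the bookkeeping described above.
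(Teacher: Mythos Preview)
Your proof is correct and follows the standard route to the multidimensional Whittaker--Shannon--Kotel'nikov theorem: Paley--Wiener representation, identification of sample values with Fourier coefficients of the band-limited spectrum, term-by-term transformation of the Fourier series, and then Plancherel for $L^2$ convergence and Cauchy--Schwarz for uniform convergence. The bookkeeping with constants and the sinc integrals is handled carefully, and the one nontrivial input you flag---the two-variable Paley--Wiener theorem with a product exponential bound---is indeed the right tool and is available in the literature.

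However, there is nothing to compare against: the paper does not prove this theorem. It is quoted as a known result from sampling theory, with citations to Parzen and Zayed, and is used as a black box to justify the rectangular cardinal series reconstruction of $\widetilde y(x;\mu_1,\mu_2)$. So your write-up supplies a proof where the paper supplies only a reference; in that sense your approach is strictly more detailed than the paper's, which simply invokes the result.
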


\bigskip Let $\sigma_{11}=\sigma_{1}(1),$ $\sigma_{21}=\sigma_{2}(1),$
$\sigma_{12}=\sigma_{1}(c),$ $\sigma_{22}=\sigma_{2}(c).$

The eigenpairs are therefore \bigskip$(\mu_{1}^{2},\mu_{2}^{2})$ where
$(\mu_{1},\mu_{2})$ solve the nonlinear system%
\begin{equation}
\left\{
\begin{array}
[c]{c}%
B_{1}(\mu_{1},\mu_{2})=0\\
B_{2}(\mu_{1},\mu_{2})=0
\end{array}
\right. \label{2.30}%
\end{equation}
where,%
\begin{equation}
\left\{
\begin{array}
[c]{c}%
B_{1}(\mu_{1},\mu_{2})\triangleq\frac{1}{\alpha(1)}\sum_{j=-\infty}^{\infty
}\sum_{k=-\infty}^{\infty}\widetilde{y}(1;\mu_{1j},\mu_{2k})\frac
{\sin2(m+1)\sigma_{11}(\mu_{1}-\mu_{1j})}{2(m+1)\sigma_{11}(\mu_{1}-\mu_{1j}%
)}\frac{\sin2(m+1)\sigma_{21}(\mu_{2}-\mu_{2k})}{2(m+1)\sigma_{21}(\mu_{2}%
-\mu_{2k})}\\
B_{1}(\mu_{1},\mu_{2})\triangleq\frac{1}{\alpha(c)}\sum_{j=-\infty}^{\infty
}\sum_{k=-\infty}^{\infty}\widetilde{y}(c;\mu_{1j},\mu_{2k})\frac
{\sin2(m+1)\sigma_{12}(\mu_{1}-\mu_{1j})}{2(m+1)\sigma_{12}(\mu_{1}-\mu_{1j}%
)}\frac{\sin2(m+1)\sigma_{22}(\mu_{2}-\mu_{2k})}{2(m+1)\sigma_{22}(\mu_{2}%
-\mu_{2k})}%
\end{array}
\right. \label{2.27}%
\end{equation}

\section{A numerical example\bigskip}

\setcounter{equation}{0} \bigskip We shall consider in this section the
two-parameter Sturm-Liouville problem with three- point boundary conditions
given by
\begin{equation}
\left\{
\begin{array}
[c]{c}%
-y^{\prime\prime}=(\mu_{1}^{2}+\mu_{2}^{2}x)y\text{ , }0<x<1\\
y(0)=y(0.7)=y(1)
\end{array}
\right. \label{3.10}%
\end{equation}

The general solution $y$ of the first differential equation can be expressed
in terms of Ai and Bi functions and their first dirivatives as%
\begin{equation}
y(x;\mu_{1},\mu_{2})=\frac{(-1)^{2/3}\left(  \text{Ai}\left(  \frac
{\sqrt[3]{-1}\mu_{1}^{2}}{\mu_{2}^{4/3}}\right)  \text{Bi}\left(
\frac{\sqrt[3]{-1}\left(  \mu_{1}^{2}+\mu_{2}^{2}x\right)  }{\mu_{2}^{4/3}%
}\right)  -\text{Ai}\left(  \frac{\sqrt[3]{-1}\left(  \mu_{1}^{2}+\mu_{2}%
^{2}x\right)  }{\mu_{2}^{4/3}}\right)  \text{Bi}\left(  \frac{\sqrt[3]{-1}%
\mu_{1}^{2}}{\mu_{2}^{4/3}}\right)  \right)  }{\mu_{2}^{2/3}\left(
\text{Ai}^{\prime}\left(  \frac{\sqrt[3]{-1}\mu_{1}^{2}}{\mu_{2}^{4/3}%
}\right)  \text{Bi}\left(  \frac{\sqrt[3]{-1}\mu_{1}^{2}}{\mu_{2}^{4/3}%
}\right)  -\text{Ai}\left(  \frac{\sqrt[3]{-1}\mu_{1}^{2}}{\mu_{2}^{4/3}%
}\right)  \text{Bi}^{\prime}\left(  \frac{\sqrt[3]{-1}\mu_{1}^{2}}{\mu
_{2}^{4/3}}\right)  \right)  }\label{3.11}%
\end{equation}

Thus the eigenpairs $(\mu_{1}^{2},\mu_{2}^{2})$~can be obtained from the
solutions $(\mu_{1},\mu_{2})$~of the system
\begin{equation}
\left\{
\begin{array}
[c]{c}%
\widetilde{y}(1;\mu_{1},\mu_{2})=0\\
\widetilde{y}(c;\mu_{1},\mu_{2})=0
\end{array}
\right. \label{3.4}%
\end{equation}

For numerical purposes we have truncated the associated series to $|j|,|k|\leq
N=50$ and took $m=5$ in the function $\alpha$. Thus, the approximate
eigenpairs are seen as solutions of the system
\begin{equation}
\left\{
\begin{array}
[c]{c}%
\widetilde{y}^{[N]}(1;\mu_{1},\mu_{2})=0\\
\widetilde{y}^{[N\}}(c;\mu_{1},\mu_{2})=0
\end{array}
\right. \label{3.5}%
\end{equation}
The next table shows the exact eigenpairs together with their approximations
using the \textit{Regularized Sampling Method} (RSM).

\begin{center}%
\begin{tabular}
[c]{|c|c|c|c|}\hline
$\mu_{1}$ (exact) & $\mu_{2}$ (exact) & $\mu_{1}$ (RSM) & $\mu_{2}$
(RSM)\\\hline
$7.788149097670813$ & $7.5908224365786845$ & $7.78814883048352$ &
$7.590823605399021$\\\hline
$4.194056047341936$ & $22.273796542861913$ & $4.194057357011064$ &
$22.273795699572364$\\\hline
$15.597607295800684$ & $15.165889997583099$ & $15.597605904429917$ &
$15.1658929384375$\\\hline
$13.87615754699624$ & $30.597837626955204$ & $13.876157718920586$ &
$30.59783713797321$\\\hline
$23.40242244972004$ & $22.744341450749697$ & $23.40242640980619$ &
$22.744329536815076$\\\hline
$9.51130982713563$ & $44.296491367611544$ & $9.511333341150682$ &
$44.29647446214639$\\\hline
$39.31833491621106$ & $15.670248767892955$ & $39.31833152483279$ &
$15.670257662031721$\\\hline
$47.18632817417246$ & $24.620489734469363$ & $47.186345272524804$ &
$24.62038504864854$\\\hline
$46.81208951678993$ & $45.483251580157955$ & $46.813097550330646$ &
$45.480149444567346$\\\hline
$30.03437464767369$ & $46.558148101315844$ & $30.034392317076748$ &
$46.55811263809656$\\\hline
\end{tabular}

\end{center}

\bigskip

\bigskip

\section{Conclusion}

In this paper, we have successfully computed the eigenpairs of two-parameter
Sturm-Liouville problems using the regularized sampling method. A method which
has been very efficient in computing the eigenvalues of broad classes of
Sturm-Liouville problems (Singular, Non-Self-Adjoint, Non-Local,
Impulsive,...). We have shown, in this work that it can tackle two-parameter
SL problems with equal ease. An example was provided to illustrate the
effectiveness of the method.

\bigskip

\textbf{Acknowledgements }The authors are grateful to KFUPM for its usual support.

\end{document}